\newcommand{\al}{\alpha}
\newcommand{\be}{\beta}
\theoremstyle{plain}
\newtheorem{thm}{Theorem}
\newtheorem{lem}[thm]{Lemma}
\newtheorem{prop}[thm]{Proposition}
\newtheorem{cor}[thm]{Corollary}
\theoremstyle{definition}
\newtheorem*{example*}{Example}
\newtheorem*{rem*}{Remark}
\newtheorem{rem}[thm]{Remark}
\newcommand{\sgn}{\text{sgn}}
\newcommand{\dd}{d_{\Omega}}
\newcommand{\meas}{\left|\mathbb{S}^{d-1}\right|}
\newcommand{\R}{\mathbb{R}}
\DeclareMathOperator{\diam}{diam}
\DeclareMathOperator{\ucodim}{\underline{co\,dim}}
\DeclareMathOperator{\supp}{supp}
\DeclareMathOperator{\dist}{dist}
\DeclareSymbolFont{bbsymbol}{U}{bbold}{m}{n}
\DeclareMathSymbol{\ind}{\mathbin}{bbsymbol}{'061}
\title[Asymptotics of weighted Gagliardo seminorms]{Asymptotics of weighted Gagliardo seminorms}
\author[M{.} Kijaczko]{Micha\l{} Kijaczko}
\keywords{fractional Sobolev space, weighted Sobolev space, Gagliardo seminorm, weight, Bourgain--Brezis--Mironescu formula, Maz'ya--Shaposhnikova formula}
\subjclass[2020]{Primary 46E35; Secondary 35A15}
\address[ M.K.]{Faculty of Pure and Applied Mathematics\\ Wroc{\l}aw University 
	of Science and Technology\\
	Wybrze\.ze Wyspia\'nskiego 27,
	50-370 Wroc{\l}aw, Poland
}
\email{michal.kijaczko@pwr.edu.pl}
\begin{document}

\begin{abstract}
In this paper, we consider fractional Sobolev spaces equipped with weights being powers of the distance to the boundary of the domain. We prove the versions of Bourgain--Brezis--Mironescu and Maz'ya--Shaposhnikova asymptotic formulae for weighted fractional Gagliardo seminorms. For $p>1$ we also provide a nonlocal characterization of classical weighted Sobolev spaces with power weights.

\end{abstract}

	\maketitle
\section{Introduction and preliminaries}
For $d\geq 1$, let $\Omega\subset\R^d$ be a nonempty open set, $0<s<1$ and $p\geq$1. Recall that the fractional Gagliardo seminorm of a measurable function $f\colon\Omega\to\R$ is given by
\begin{equation}\label{gagliardoseminorm}
[f]_{W^{s,p}(\Omega)}:=\left(\int_{\Omega}\int_{\Omega}\frac{|f(x)-f(y)|^p}{|x-y|^{d+sp}}\,dy\,dx\right)^{\frac{1}{p}}.
\end{equation}
The fractional Sobolev space $W^{s,p}(\Omega)$ is then defined as
$$
W^{s,p}(\Omega)=\{f\in L^p(\Omega):[f]_{W^{s,p}(\Omega)}<\infty\}.
$$
The celebrated Bourgain--Brezis--Mironescu theorem \cite{BBM01} gives the asymptotic of the integral form \eqref{gagliardoseminorm}, as $s\rightarrow 1^-$:  if $\Omega$ is a $W^{1,p}$ - extension domain, then, for $p>1$,
\begin{equation}\label{bbm}
\lim_{s\rightarrow 1^-}(1-s)\int_{\Omega}\int_{\Omega}\frac{|f(x)-f(y)|^p}{|x-y|^{d+sp}}\,dy\,dx=K_{d,p}\int_{\Omega}\left|\nabla f(x)\right|^p\,dx,
\end{equation}
whenever $f\in L^p(\Omega)$. The constant $K_{d,p}$ appearing in \eqref{bbm} is equal to
\begin{equation}\label{Kdp}
 K_{d,p}=\frac{2\pi^{\frac{d-1}{2}}}{p}\frac{\Gamma\left(\frac{p+1}{2}\right)}{\Gamma\left(\frac{p+d}{2}\right)}.   
\end{equation}
For $p=1$, \eqref{bbm} also holds, provided that $f\in W^{1,1}(\Omega)$. D\'{a}vila \cite{MR1942130} showed, that in this case, for sufficiently regular $\Omega$ and $f\in L^1(\Omega)$, an analogous convergence is true for the Sobolev seminorm replaced by the $\text{BV}$-seminorm of $f$ on the right-hand side of \eqref{bbm}. Recent results extend the Bourgain--Brezis--Mironsescu formula to the setting of  fractional Orlicz--Sobolev spaces \cite{MR4215683}, fractional Sobolev spaces with variable exponents \cite{Minhyun}, Triebel--Lizorkin spaces \cite{MR4686398} or more general Lévy-type measures \cite{MR4565919}.

Another famous result of this kind was established by Maz'ya and Shaposhnikova \cite{MR1940355}. They proved that
\begin{equation}\label{mazyashapo}
\lim_{s\rightarrow 0^+}s\int_{\R^d}\int_{\R^d}\frac{|f(x)-f(y)|^p}{|x-y|^{d+sp}}\,dy\,dx=\frac{2}{p}\meas\int_{\R^d}|f(x)|^p\,dx.
\end{equation}
Here $\meas=\frac{2\pi^{d/2}}{\Gamma(d/2)}$ is the surface measure of the unit sphere in $\R^d$ and it is assumed that $f\in\bigcup_{0<s<1}W^{s,p}(\R^d)$. This limit has been also investigated  in the context of fractional Orlicz--Sobolev spaces \cite{MR4165063}  or in the magnetic setting \cite{MR3601583}. 

In this paper, we are interested in weighted fractional Gagliardo seminorms. The weights that are taken into consideration are of the particular form: they are powers of the distance to the boundary of the domain. Let us now make this precise. For $\Omega$ being an open, nonempty, proper subset of $\R^d$, $f\colon\Omega\to\R$ a measurable function, $p\geq 1$, $0\leq s<1$ and $\al,\be\in\R$ we define the weighted Gagliardo seminorm as
\begin{equation}\label{weightedgagliardoalfabeta}
[f]_{W^{s,p}_{\al,\be}(\Omega)}:=\left(\int_{\Omega}\int_{\Omega}\frac{|f(x)-f(y)|^p}{|x-y|^{d+sp}}d_{\Omega}(x)^{-\al}d_{\Omega}(y)^{-\be}\,dy\,dx\right)^\frac{1}{p}.    
\end{equation}
Here $d_{\Omega}(x):=\dist(x,\partial\Omega)=\inf_{y\in\partial\Omega}|x-y|,\,x\in\R^d$, denotes the distance to the boundary. The weighted fractional Sobolev space is then defined as
$$
W^{s,p}_{\al,\be}(\Omega)=\{f\in L^p(\Omega, \dd^{-\al-\be}):[f]_{W^{s,p}_{\al,\be}(\Omega)}<\infty\}
$$
and we endow it with the natural norm $$\|f\|_{W^{s,p}_{\al,\be}(\Omega)}=\|f\|_{L^p\left(\Omega, \dd^{-\al-\be}\right)}+[f]_{W^{s,p}_{\al,\be}(\Omega)}.$$

It is worth noticing here that, for unbounded domains, unlike in the unweighted case we may allow to have $s=0$ in \eqref{weightedgagliardoalfabeta} and keep the expression finite. Weighted Gagliardo seminorms with power-type weights have been widely studied in past years. For example, we refer to \cite{MR3420496} and \cite{MK} for the investigation of the problem of density of smooth, compactly supported functions in the weighted fractional Sobolev space related to \eqref{weightedgagliardoalfabeta}, or to \cite{ID} for the results on interpolation spaces. Fractional Sobolev spaces with the weight $\min\{\dd(x),\dd(y)\}^{2\al}$ appear in \cite{MR3620141} while studying nonlocal equations involving fractional Laplacian. Recent results are devoted to weighted fractional Hardy inequalities \cite{sharpweighted}, \cite{sharpweighted1<p<2}.

Recall that, for $\gamma\in\R$, the classical Sobolev space with power-type weight $d_{\Omega}^{-\gamma}$ is defined as
$$
W^{1,p}_{\gamma}(\Omega):=\left\{f\in L^p(\Omega, \dd^{-\gamma}):\int_{\Omega}\frac{|\nabla f(x)|^p}{\dd(x)^{\gamma}}\,dx<\infty\right\},
$$
where the gradient is understood in the distributional sense. Of course, $W^{1,p}_{\gamma}(\Omega)$ is equipped with the natural Sobolev-type norm and analogously we define the weighted Sobolev space $W^{1,p}(\Omega, w)$ for any weight $w$. These spaces are well-known; we refer the  reader  interested with details to the book by Kufner \cite{MR664599}. Weighted Sobolev spaces play a crucial role in the theory of partial differential equations.  We will call $\Omega$ a $W^{1,p}_{\gamma}$ - extension domain if there exists a bounded linear operator $$\Lambda\colon W^{1,p}_{\gamma}(\Omega)\to W^{1,p}(\R^d,\dd^{-\gamma})$$ (notice that $\dd$ is defined also outside of the domain $\Omega$) such that $\Lambda f \big|_{\Omega}=f$ and $$\|\Lambda f\|_{W^{1,p}(\R^d,\dd^{-\gamma})}\leq C\|f\|_{W^{1,p}_{\gamma}(\Omega)}$$ for all $f\in W^{1,p}_{\gamma}(\Omega)$, with $C$ being a universal constant. We refer to Section \ref{Section3} for a more detailed discussion on the extension problem.

In this note, we study the limit behavior of the weighted fractional Gagliardo seminorms. We establish limits involving $W^{0,p}_{\al,\al}(\R^d)$ - seminorms, when $\al\rightarrow 0^+$, or $\al\rightarrow d^-$, being a~ weighted analogue of Maz'ya--Shaposhnikova formula. We also provide the asymptotic of $W^{s,p}_{\al,\be}$ - seminorm, as $s\rightarrow 1^-$, in the spirit of Bourgain, Brezis and Mironescu theorem. Using this result, for $p>1$ we prove a nonlocal characterization of classical Sobolev spaces with power weights on extension domains.
\section{Main results}

\subsection{Bourgain--Brezis--Mironescu-type asymptotics for weighted Gagliardo seminorms}

\begin{thm}\label{bbmweighted}
 Let $\Omega\subset\R^d$ be nonempty and open. Let $f\in C_c^2(\overline{\Omega})$, $p,d\geq 1$ and let $U\subset\R^2$ be a nonempty open set. 
Assume that, for given $f$, the seminorms \eqref{weightedgagliardoalfabeta} are finite for all $(\al,\be)\in U$ and for all $s\in (s_0,1)$ for some $s_0\in(0,1)$. Then, 
 \begin{equation}\label{limits1}
 \lim_{s\rightarrow 1^-}(1-s)\int_{\Omega}\int_{\Omega}\frac{|f(x)-f(y)|^p}{|x-y|^{d+sp}}d_{\Omega}(x)^{-\al}d_{\Omega}(y)^{-\be}\,dy\,dx=K_{d,p}\int_{\Omega}\frac{|\nabla f(x)|^p}{d_{\Omega}(x)^{\al+\be}}\,dx.    
 \end{equation}
 Moreover, if $\Omega$ is a $W^{1,p}_{\al+\be}$ - extension domain, $p>1$, $\dd^{-\al-\be}$ belongs to the Muckenhoupt class $A_p$ and \eqref{condition1} or \eqref{condition2} is satisfied, then \eqref{limits1} holds for all $f\in L^{p}(\Omega, \dd^{-\al-\be})$, with the convention that the right-hand side is infinite, if $f\notin W^{1,p}_{\al+\be}(\Omega)$. In this case we provide a~ nonlocal characterization of the Sobolev space $W^{1,p}_{\al+\be}(\Omega)$: if and only if
 $$
\liminf_{s\rightarrow 1^{-}}(1-s)\int_{\Omega}\int_{\Omega}\frac{|f(x)-f(y)|^p}{|x-y|^{d+sp}}d_{\Omega}(x)^{-\al}d_{\Omega}(y)^{-\be}\,dy\,dx<\infty,
$$
then $f\in W^{1,p}_{\al+\be}(\Omega)$. For $p=1$, \eqref{limits1} holds if $f\in W^{1,1}_{\al+\be}(\Omega)$.
\end{thm}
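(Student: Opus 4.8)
The plan is to prove \eqref{limits1} first for $f\in C_c^2(\overline\Omega)$ and then to bootstrap to arbitrary $f$ by combining the extension property, density of smooth functions and the Muckenhoupt condition. For the smooth case I pass to the variable $h=y-x$, writing the left-hand side of \eqref{limits1} as $\int_\Omega d_\Omega(x)^{-\al}\,\Phi_s(x)\,dx$, where
$$\Phi_s(x)=(1-s)\int_{\{h:\,x+h\in\Omega\}}\frac{|f(x)-f(x+h)|^p}{|h|^{d+sp}}\,d_\Omega(x+h)^{-\be}\,dh.$$
The analysis rests on the pointwise claim $\Phi_s(x)\to K_{d,p}|\nabla f(x)|^p d_\Omega(x)^{-\be}$ for every $x\in\Omega$, which I would prove by splitting the inner integral at the boundary-adapted scale $\delta=\epsilon\,d_\Omega(x)$. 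Because the open ball $B(x,d_\Omega(x))$ is always contained in $\Omega$, on the near region $|h|<\delta$ the Taylor expansion $f(x+h)-f(x)=\nabla f(x)\cdot h+O(|h|^2)$ is valid inside $\Omega$, and the one-Lipschitz property $|d_\Omega(x+h)-d_\Omega(x)|\le|h|$ gives $d_\Omega(x+h)^{-\be}=d_\Omega(x)^{-\be}(1+O(\epsilon))$ there; passing to polar coordinates and letting $s\to1^-$, then $\epsilon\to0$, produces $\tfrac1p\int_{\mathbb{S}^{d-1}}|\nabla f(x)\cdot\sigma|^p\,d\sigma=K_{d,p}|\nabla f(x)|^p$. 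On the far region $|h|\ge\delta$, where $|f(x)-f(x+h)|\le2\|f\|_\infty$, the bound $|h|^{-d-sp}\le\delta^{-(s-s_1)p}|h|^{-d-s_1p}$ and the assumed finiteness of the seminorm at one exponent $s_1\in(s_0,1)$ force the $(1-s)$-weighted contribution to vanish.

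The single most important point — and the main obstacle created by the weight — is the choice of the $x$-dependent scale $\delta=\epsilon\,d_\Omega(x)$: at a fixed scale the ratio $d_\Omega(x+h)/d_\Omega(x)$ is uncontrolled near $\partial\Omega$, so the weight cannot be frozen on the mass-carrying region, whereas on the scale $\epsilon\,d_\Omega(x)$ it is constant up to $O(\epsilon)$. With the pointwise limit in hand, Fatou's lemma gives $\liminf_{s\to1^-}\int_\Omega d_\Omega^{-\al}\Phi_s\,dx\ge K_{d,p}\int_\Omega|\nabla f|^p d_\Omega^{-\al-\be}\,dx$, which already settles the liminf characterization in the smooth case. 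For the matching upper bound I split globally at $\delta=\epsilon\,d_\Omega(x)$: the far part is $o(1)$ uniformly by the $s_1$-comparison, while on the near part the scaled Taylor estimate dominates the integrand by a constant multiple of $|\nabla f(x)|^p d_\Omega(x)^{-\al-\be}$, integrable precisely when the right-hand side of \eqref{limits1} is finite (if it is infinite, both sides are $+\infty$ by Fatou). Dominated convergence then closes the smooth case.

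For general $f\in L^p(\Omega,d_\Omega^{-\al-\be})$ with $p>1$, the hypotheses serve to reduce to the smooth case. The condition $d_\Omega^{-\al-\be}\in A_p$ makes the Hardy--Littlewood maximal operator bounded on $L^p(\R^d,d_\Omega^{-\al-\be})$, so that mollifications converge in the weighted norm and $C^\infty$ functions are dense in $W^{1,p}_{\al+\be}$; together with the $W^{1,p}_{\al+\be}$-extension operator and condition \eqref{condition1} or \eqref{condition2} — which furnish the uniform control of the Gagliardo integrand by the weighted Sobolev norm — this lets me approximate $f$ by $C_c^2$ functions and pass to the limit in \eqref{limits1}. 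For the characterization I argue by weak compactness: if the liminf in the statement is finite, the near-diagonal lower estimate bounds a family of weighted difference quotients of $f$ in $L^p(\Omega,d_\Omega^{-\al-\be})$, and since $A_p$ makes this space reflexive, a weakly convergent subsequence exists whose limit is the distributional gradient, whence $\nabla f\in L^p(\Omega,d_\Omega^{-\al-\be})$ and $f\in W^{1,p}_{\al+\be}(\Omega)$.

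Finally, for $p=1$ the reflexivity argument is unavailable and, as in the classical theorem, no characterization is asserted; but for $f\in W^{1,1}_{\al+\be}(\Omega)$ the formula \eqref{limits1} follows by approximating $f$ in the weighted $W^{1,1}$-norm by $C_c^2$ functions, applying the smooth case, and transferring the convergence through the bound $|f(x)-f(x+h)|\le|h|\int_0^1|\nabla f(x+th)|\,dt$ on segments contained in $\Omega$, which controls the Gagliardo integrand uniformly in $s$ close to $1$.
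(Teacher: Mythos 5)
Your smooth case is, modulo presentation, the paper's own argument. The split at the boundary-adapted scale $\epsilon\,\dd(x)$ is the paper's split at $\theta\,\dd(x)$; freezing the weight via the $1$-Lipschitz property of $\dd$ is identical; and your far-region bound $|h|^{-d-sp}\le\delta^{-(s-s_1)p}|h|^{-d-s_1p}$ with $\delta=\epsilon\,\dd(x)$ is exactly the paper's splitting $s=\lambda s+(1-\lambda)s$, trading a power of $|x-y|^{-1}$ for a power of $\dd(x)^{-1}$ and invoking openness of $U$ to legitimize the shifted exponent. Two points you gloss over there: (i) the shifted exponent $\al+(s-s_1)p$ moves with $s$, so you need the finiteness of the seminorms to be locally uniform in the parameters (the paper inserts Lemma \ref{lem1} on continuity of $(s,\al,\be)\mapsto[f]^p_{W^{s,p}_{\al,\be}(\Omega)}$ for precisely this); (ii) for unbounded $\Omega$ your proposed dominating function $C|\nabla f(x)|^p\dd(x)^{-\al-\be}$ is not what the Lipschitz/Taylor bound actually yields — it carries factors like $(\epsilon\,\dd(x))^{(1-s)p}$, unbounded in $x$ — and the fix, as in the paper, is to note that the near part vanishes unless $\dist(x,\supp f)<\epsilon\,\dd(x)$, which, because $\epsilon<1$ strictly and $\dd(x)\le\dist(x,\supp f)+C$, confines $x$ to a compact set before dominating. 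The forward implication for general $f$ with $p>1$ also matches the paper (density of $C_c^2$ via \cite{MR2072105} plus the extension operator, together with the embedding $\|f\|_{W^{s,p}_{\al,\be}(\Omega)}\le C\|f\|_{W^{1,p}_{\al+\be}(\Omega)}$ with $(1-s)$-uniform constant under \eqref{condition1} or \eqref{condition2}), though note that this embedding is a standalone proposition in the paper, proved via Hedberg's convolution estimates, the pointwise maximal-function inequality and the $A_1$/$A_p$ hypotheses; your sketch treats it as given.

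The genuine gap is in the characterization. You assert that finiteness of the liminf ``bounds a family of weighted difference quotients of $f$ in $L^p(\Omega,\dd^{-\al-\be})$'' and conclude by reflexivity, but the bound $(1-s_k)[f]^p_{W^{s_k,p}_{\al,\be}(\Omega)}\le A$ along a sequence $s_k\to1^-$ does not control difference quotients: even in the unweighted case the naive estimate gives $\|f(\cdot+h)-f\|^p_{L^p}\lesssim|h|^{s p}[f]^p_{W^{s,p}}\approx|h|^{sp}A/(1-s)$, which blows up as $s\to1^-$. The classical way to exploit the $(1-s)$ factor is the mollification contraction $[f*\rho_\epsilon]_{W^{s,p}}\le[f]_{W^{s,p}}$ followed by the smooth-case identity applied to $f*\rho_\epsilon$, and exactly this step breaks here: convolution does not interact with the weight $\dd(x)^{-\al}\dd(y)^{-\be}$, translates exit $\Omega$ near $\partial\Omega$, and you never address identifying the weak limit with the distributional gradient. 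The paper circumvents all of this with a Whitney decomposition: restricting the double integral to the diagonal blocks $Q_j\times Q_j$, where $\dd\approx l(Q_j)$ is essentially constant, it applies the classical unweighted Bourgain--Brezis--Mironescu characterization on each cube to get $f\in W^{1,p}(Q_j)$ together with $\lim_{s\to1^-}(1-s)[f]^p_{W^{s,p}(Q_j)}=K_{d,p}\int_{Q_j}|\nabla f|^p\,dx$, then sums using $l(Q_j)^{-\al-\be}\int_{Q_j}|\nabla f|^p\,dx\approx\int_{Q_j}|\nabla f(x)|^p\dd(x)^{-\al-\be}\,dx$ and Fatou. If you want to retain a compactness flavor you must at least localize to Whitney cubes before mollifying, at which point you have rederived the paper's argument. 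Note also that this gap infects the ``right-hand side infinite if $f\notin W^{1,p}_{\al+\be}(\Omega)$'' clause, which the paper obtains as the contrapositive of the characterization. Finally, your $p=1$ paragraph has a smaller instance of the same problem: the segment bound $|f(x)-f(x+h)|\le|h|\int_0^1|\nabla f(x+th)|\,dt$ controls only the near-diagonal part of $[f-f_k]_{W^{s,1}_{\al,\be}(\Omega)}$ (where $[x,x+h]\subset\Omega$); the far part still needs the maximal-function/$A_1$ machinery or the exponent-shifting trick, so approximation in $W^{1,1}_{\al+\be}(\Omega)$ alone does not ``transfer the convergence'' as stated.
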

A natural question arises at first, namely, what are the conditions for the parameters and the domain to have the weighted Gagliardo seminorms finite? For the discussion on the range of parameters in the case when $\Omega=\R^d\setminus\{0\}$, see Section \ref{Section3}. Moreover, it is shown in \cite[Section 3]{MK}, that for $C_c^1$ functions and $\Omega$ bounded, \eqref{weightedgagliardoalfabeta} is finite when $0<s<1$ and $\al+\be<d-\overline{\text{dim}}_M(\partial\Omega)$, where $\overline{\text{dim}}_M(\partial\Omega)$ is the upper Minkowski dimension of the boundary, see \cite[Section 2]{MR4144553}. Additionaly, if $\Omega$ is bounded and uniform \cite{10.2748/tmj/1178228081}, then, according to \cite[Lemma 16]{MK},  \eqref{weightedgagliardoalfabeta} is finite when $\al,\be<\ucodim_A(\partial\Omega)$ and $\al+\be<d-\overline{\text{dim}}_M(\partial\Omega)+p(1-s)$. Here $\ucodim_A(\partial \Omega)$ stands for the lower Assouad codimension of the boundary; the definition of the latter may be found in \cite[Section 3]{MR3205534} or Section \ref{Assouad}.  All the mentioned examples fit into the assumptions of Theorem \ref{bbmweighted}. For Lipschitz domain we have $\ucodim_A(\partial\Omega)=d-\overline{\text{dim}}_M(\partial\Omega)=1$.

It is worth to point out that in the first part of the Theorem \ref{bbmweighted} we do not assume any regularity of the domain $\Omega$. The reason for it is that the assumption that $\al,\be\in U$, where $U$ is open, gives us a certain flexibility to work with exponents of weights. More precisely, the proof of Bourgain--Brezis--Mironescu formula is based on the fact that if $\Omega$ is an extension domain and $f$ is smooth, then, for $0<\theta<1$,
\begin{equation}\label{limitregular}
 \lim_{s\rightarrow 1^-}(1-s)\int_{\Omega}\int_{|x-y|>\theta \dd(x)}\frac{|f(x)-f(y)|^p}{|x-y|^{d+sp}}\,dy\,dx=0.   
\end{equation}
However, \cite[Example 2.1]{MR3865123} reveals that for irregular domains, \eqref{limitregular} does not hold in general. The presence of weights and compactness of the support of the function $f$ removes this difficulty, see the proof of Theorem \ref{bbmweighted}.

In particular, Theorem \ref{bbmweighted} applied to the case when $\Omega=\R^d\setminus\{0\}$ gives us the following result.
\begin{cor}\label{corRd}
Let $f\in C_c^2(\R^d)$ and $-p<\al,\be<d$, $\al+\be<d$. Then,
\begin{equation}\label{corollaryRd}
 \lim_{s\rightarrow 1^-}(1-s)\int_{\R^d}\int_{\R^d}\frac{|f(x)-f(y)|^p}{|x-y|^{d+sp}|x|^{\al}|y|^{\be}}\,dy\,dx=K_{d,p}\int_{\R^d}\frac{\left|\nabla f(x)\right|^p}{|x|^{\al+\be}}\,dx.
 \end{equation}
 Moreover, if $f\in L^p(\R^d,|\cdot|^{-\al-\be})$, when $p>1$, or $f\in W^{1,1}_{\al+\be}(\R^d)$, when $p=1$, then the above holds if, in addition, $\,-(p-1)d<\al+\be<d$ (or $0\leq\al+\be<d$, if $p=1$) and either $0\leq\al,\be<d$ or $\al\be\geq 0$ and $0\leq\al+\be<d$.
\end{cor}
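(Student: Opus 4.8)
The plan is to derive Corollary \ref{corRd} directly from Theorem \ref{bbmweighted} by specializing $\Omega=\R^d\setminus\{0\}$, for which $\partial\Omega=\{0\}$ and hence $d_{\Omega}(x)=|x|$. The entire task reduces to verifying the hypotheses of the theorem under the stated ranges on $\al,\be$. First I would check the finiteness requirement: the seminorm \eqref{weightedgagliardoalfabeta} must be finite for $(\al,\be)$ lying in some open neighbourhood $U\subset\R^2$ and for all $s$ near $1$. Since $f\in C_c^2(\R^d)$, the function is bounded with bounded derivatives and compactly supported; the only potential sources of divergence are the singularity of the weight at the origin, the diagonal $x=y$ singularity of the kernel, and behaviour near infinity. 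The compact support handles infinity. Near the diagonal, $|f(x)-f(y)|\le C|x-y|$ gives local integrability precisely because $s<1$, and near the origin the weight $|x|^{-\al}|y|^{-\be}$ is integrable against $dx\,dy$ when $\al,\be<d$; the joint condition $\al+\be<d$ controls the region where $x$ and $y$ are simultaneously near $0$. One wants these inequalities to hold on an \emph{open} set, which is automatic as long as the strict inequalities $-p<\al,\be<d$ and $\al+\be<d$ are satisfied, so I would record that $U$ can be taken as a small open box around $(\al,\be)$ inside this region. This yields \eqref{corollaryRd} for $f\in C_c^2$ immediately from \eqref{limits1}.

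For the second, stronger assertion I would invoke the remaining hypotheses of Theorem \ref{bbmweighted}: that $\Omega=\R^d\setminus\{0\}$ is a $W^{1,p}_{\al+\be}$-extension domain, that the weight $|x|^{-\al-\be}$ lies in the Muckenhoupt class $A_p$, and that one of the auxiliary conditions \eqref{condition1} or \eqref{condition2} holds. The Muckenhoupt condition is the classical fact that $|x|^{\gamma}\in A_p$ on $\R^d$ if and only if $-d<\gamma<d(p-1)$; writing $\gamma=-(\al+\be)$ translates this into $-(p-1)d<\al+\be<d$, which is exactly the extra hypothesis imposed in the second sentence of the corollary (for $p=1$ the $A_1$ condition forces $\al+\be\ge 0$, matching the stated $0\le\al+\be<d$). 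The extension property for the punctured space relative to a point singularity, together with whichever of \eqref{condition1}/\eqref{condition2} is the natural one for power weights at a point, should be discussed in Section \ref{Section3}; I would cite that discussion rather than reprove it here.

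The main obstacle, and the reason for the somewhat intricate final clause ``either $0\le\al,\be<d$ or $\al\be\ge 0$ and $0\le\al+\be<d$,'' is reconciling the separate one-variable restrictions with the Muckenhoupt restriction on the sum. The quantity controlled by the $A_p$ class is $\al+\be$, whereas finiteness of the seminorm and the structure of the extension argument constrain $\al$ and $\be$ individually. When both exponents are nonnegative and below $d$ the weight is genuinely singular only at the origin in a balanced way and every hypothesis is met directly; when they have mixed signs one must ensure that the negative exponent does not destroy the $A_p$ property or the finiteness of the gradient integral $\int |x|^{-\al-\be}|\nabla f|^p\,dx$, which is why the alternative branch demands $\al\be\ge 0$ together with $0\le\al+\be<d$. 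I would therefore close the proof by a short case analysis verifying, in each branch, that the triple of conditions (finiteness on an open $U$, the $A_p$ membership expressed through $-(p-1)d<\al+\be<d$, and the extension/auxiliary hypothesis) is simultaneously satisfied, and then applying Theorem \ref{bbmweighted} verbatim to conclude both \eqref{corollaryRd} and the nonlocal characterization of $W^{1,p}_{\al+\be}(\R^d\setminus\{0\})$.
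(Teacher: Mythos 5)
Your proposal follows essentially the same route as the paper, which gives the corollary no separate proof but obtains it exactly as you do: specialize Theorem \ref{bbmweighted} to $\Omega=\R^d\setminus\{0\}$ (so $\dd(x)=|x|$ and $\ucodim_A(\{0\})=d$), take the open box $U$ inside the region $-p<\al,\be<d$, $\al+\be<d$ where finiteness is guaranteed by \cite[Lemma 2.1]{MR3420496} as recalled in Section \ref{Section3}, and translate the Muckenhoupt hypotheses via $|x|^{-\gamma}\in A_p\Leftrightarrow -(p-1)d<\gamma<d$ and $|x|^{-\gamma}\in A_1\Leftrightarrow 0\leq\gamma<d$, so that \eqref{condition1} becomes $0\leq\al,\be<d$ and \eqref{condition2} becomes $\al\be\geq 0$, $0\leq\al+\be<d$. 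One small correction to your finiteness check: compact support of $f$ does \emph{not} by itself handle the behaviour at infinity, since for $\al$ or $\be$ negative the weight grows there and the tail integral $\int_{|y|\,\mathrm{large}}|y|^{-\be}|x-y|^{-d-sp}\,dy$ converges precisely because $\be>-sp$ (and symmetrically for $\al$) --- this, with $s_0$ chosen close enough to $1$ that $-s_0p<\min\{\al,\be\}$, is where the lower bound $-p$ in the hypothesis actually comes from, not from the support.
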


\subsection{Asymptotics of $W^{0,p}_{\al,\al}(\R^d)$ - seminorms for $\al\rightarrow 0^+$ and $\al\rightarrow d^-$}

Below are our two results concerning the asymptotic behavior of the $W^{0,p}_{\al,\al}(\R^d)$ - seminorm.
\begin{thm}\label{thm2}
Let $p,d\geq 1$ and $f\in C_c^{1}(\R^d)$. Then
$$
\lim_{\alpha\rightarrow 0^+}\al\int_{\R^d}\int_{\R^d}\frac{|f(x)-f(y)|^p}{|x-y|^d|x|^{\al}|y|^{\al}}\,dy\,dx=2\meas\int_{\R^d}|f(x)|^p\,dx.
$$
\end{thm}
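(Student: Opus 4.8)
The plan is to reduce the double integral to a single weighted integral of $|f|^p$ by exploiting that $f\in C_c^1(\R^d)$ is compactly supported, say $A:=\supp f\subset B(0,R)$, is globally Lipschitz with constant $L=\|\nabla f\|_\infty$, and vanishes on $\partial A$. I would split $\R^d\times\R^d=(A\times A)\cup(A\times A^c)\cup(A^c\times A)\cup(A^c\times A^c)$. On $A^c\times A^c$ the integrand vanishes. On $A\times A^c$ and $A^c\times A$ one of the two values of $f$ is zero, so $|f(x)-f(y)|^p$ collapses to $|f(x)|^p$, respectively $|f(y)|^p$; by the $x\leftrightarrow y$ symmetry of the kernel these two blocks are equal, and this is exactly where the factor $2$ in $2\meas$ will originate.

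Next I would show the diagonal block $A\times A$ is negligible. Using $|f(x)-f(y)|\le L|x-y|$, its integrand is dominated by $L^p|x-y|^{p-d}|x|^{-\alpha}|y|^{-\alpha}$, whose double integral over the bounded set $A$ stays bounded as $\alpha\to 0^+$: the diagonal singularity $|x-y|^{p-d}$ is integrable since $p>0$, while $|x|^{-\alpha}$ and $|y|^{-\alpha}$ are integrable on bounded sets for $\alpha<d$. Multiplying by the prefactor $\alpha$ then kills this block in the limit, so that the entire asymptotics is carried by the off-diagonal term
$$
2\alpha\int_A |f(x)|^p\,|x|^{-\alpha}\Big(\int_{A^c}\frac{|y|^{-\alpha}}{|x-y|^d}\,dy\Big)\,dx.
$$

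The heart of the matter is the inner integral. For fixed $x$ in the interior of $A$ I would prove the pointwise limit $\alpha\int_{A^c}|y|^{-\alpha}|x-y|^{-d}\,dy\to\meas$. Splitting the $y$-integral at a radius $T>R$, the part over $A^c\cap\{|y|\le T\}$ is finite for fixed interior $x$ (there $\dist(x,A^c)>0$, so no diagonal singularity) and is annihilated by the factor $\alpha$; on $\{|y|>T\}\subset A^c$ one may replace $|x-y|^{-d}$ by $|y|^{-d}$ up to an $O(|y|^{-d-1})$ error, and the main term is $\alpha\int_{|y|>T}|y|^{-d-\alpha}\,dy=\meas\,T^{-\alpha}\to\meas$, the error again being killed by $\alpha$. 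Since $|x|^{-\alpha}\to 1$, the integrand converges pointwise to $\meas\,|f(x)|^p$, and after passing to the limit under the integral sign one obtains $2\meas\int_A|f|^p=2\meas\int_{\R^d}|f|^p$, as claimed. Note that, in contrast to the classical Maz'ya--Shaposhnikova constant $\tfrac{2}{p}\meas$, here only a single power $\alpha$ (rather than $sp$) controls the far-field decay, which is why the limiting constant carries no factor $1/p$.

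The hard part will be justifying the passage to the limit by dominated convergence, i.e.\ producing a majorant for $\alpha\,|f(x)|^p|x|^{-\alpha}\int_{A^c}|y|^{-\alpha}|x-y|^{-d}\,dy$ that is integrable over $A$ uniformly for small $\alpha$. The difficulty is that the inner integral is not uniformly bounded in $x$: as $x\to\partial A$ the set $A^c$ approaches $x$, the diagonal singularity reappears, and the near-field contribution grows logarithmically in $\dist(x,\partial A)$. The resolution is that $f$ vanishes on $\partial A$, so $C^1$-regularity yields $|f(x)|^p\le L^p\dist(x,\partial A)^p$, and this decay dominates the at most logarithmic near-field growth, while the far-field part ($|y|>2R$) is uniformly bounded directly since there $|x-y|\ge|y|/2$. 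A separate but analogous estimate, again using the $\dist(x,\partial A)^p$ decay together with the local integrability of $|y|^{-\alpha}$, controls the region where the weight singularity at the origin interacts with the diagonal; one then restricts to $\alpha<\min\{1,p,d\}$ to keep every majorant integrable, and \eqref{thm2} follows by dominated convergence.
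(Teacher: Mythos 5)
Your proposal is correct, but it takes a genuinely different route from the paper's proof. The paper establishes the two bounds separately: the lower bound is imported from the sharp weighted fractional Hardy inequality of \cite[Corollary 7]{sharpweighted}, whose constant $\mathcal{C}(d,p,\al)$ satisfies $\al\,\mathcal{C}(d,p,\al)\to 2\meas$, while the upper bound follows the original Maz'ya--Shaposhnikova splitting at $|y|\geq 2|x|$ versus $|x|<|y|<2|x|$, where the inequality $|x-y|\leq\sqrt{6|x||y|}$ converts the two weights into an extra kernel factor $|x-y|^{-2\al}$, so the near-diagonal region becomes part of the $W^{2\al,p}(\R^d)$-seminorm, known to be $o(1/\al)$ by \cite{MR1940355}. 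Your support-based decomposition instead obtains both bounds in one stroke: the $A\times A$ block is $O(1)$ uniformly in small $\al$ by the Lipschitz bound (the kernel $|x-y|^{p-d}$ is locally integrable, and $|x|^{-\al}|y|^{-\al}\leq\tfrac12(|x|^{-2\al}+|y|^{-2\al})$ handles the weights), and the cross terms are computed exactly via dominated convergence, with the genuinely delicate point --- the logarithmic blow-up of $\int_{A^c}|x-y|^{-d}|y|^{-\al}\,dy$ as $x\to\partial A$ --- correctly neutralized by $|f(x)|\leq L\,\dist(x,\partial A)$ together with the boundedness of $t^p\log(1/t)$; the near-origin interaction you defer is indeed routine (for $|y|\leq|x|/2$ one has $|x-y|\geq|x|/2$, giving a contribution $\lesssim|x|^{-\al}$, integrable uniformly for small $\al$), and your heuristic for the absent $1/p$ is precisely the paper's computation $\al\int_{|y|>T}|y|^{-d-\al}\,dy=\meas\,T^{-\al}\to\meas$ in its treatment of $I_1$. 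What your route buys is self-containedness and elementarity: no sharp Hardy inequality and no imported $o(1/\al)$ lemma. What the paper's route buys is generality: as its Remark after the proof notes, the Hardy-plus-splitting argument needs only $f$ bounded with compact support and $f\in\bigcup_{0<s<1}W^{s,p}(\R^d)$, so it covers indicator functions $\chi_E$, which lie beyond the reach of your Lipschitz diagonal estimate and your $\dist(x,\partial A)^p$ boundary decay.
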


\begin{thm}\label{mazyashapolimitd}
Let $p,d\geq 1$, let $f$ be of class $C^1$, $\displaystyle\lim_{x\rightarrow\infty}f(x)$ exists and is finite. Moreover, suppose that the support of $f$ does not contain the origin (in particular, we may have $f\in C_c^1(\R^d\setminus\{0\})$). Then,
$$
\lim_{\al\rightarrow d^-}\left(d-\al\right)\int_{\R^d}\int_{\R^d}\frac{|f(x)-f(y)|^p}{|x-y|^d|x|^{\al}|y|^{\al}}\,dy\,dx=2\meas\int_{\R^d}\frac{|f(x)|^p}{|x|^{2d}}\,dx.
$$
\end{thm}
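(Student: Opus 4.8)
The plan is to exploit the hypothesis $0\notin\supp f$ to localize the singularity at the origin. First I would fix $\rho>0$ with $f\equiv 0$ on the ball $A:=B(0,\rho)$, and split the double integral $I(\al)$ according to whether each variable lies in $A$ or in $A^c$. The contribution of $A\times A$ vanishes since $f(x)=f(y)=0$ there, and by the symmetry $x\leftrightarrow y$ the two ``cross'' contributions $A\times A^c$ and $A^c\times A$ coincide. This yields $I(\al)=2\,\mathrm{I}(\al)+\mathrm{III}(\al)$, where
\begin{equation*}
\mathrm{I}(\al)=\int_{A}\int_{A^c}\frac{|f(y)|^p}{|x-y|^d|x|^\al|y|^\al}\,dy\,dx,\qquad \mathrm{III}(\al)=\int_{A^c}\int_{A^c}\frac{|f(x)-f(y)|^p}{|x-y|^d|x|^\al|y|^\al}\,dy\,dx,
\end{equation*}
and in $\mathrm{I}(\al)$ I have used $f(x)=0$ for $x\in A$. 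The whole problem reduces to computing $\lim_{\al\to d^-}(d-\al)\mathrm{I}(\al)$ and showing $(d-\al)\mathrm{III}(\al)\to0$.

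The main term is $\mathrm{I}(\al)$. The key observation is that the family of measures $(d-\al)\,|x|^{-\al}\mathbf 1_{A}(x)\,dx$ concentrates at the origin as $\al\to d^-$: its total mass equals $(d-\al)\meas\int_0^\rho r^{d-1-\al}\,dr=\meas\,\rho^{d-\al}\to\meas$, while its mass on any annulus $\{\delta<|x|<\rho\}$ is $\meas(\rho^{d-\al}-\delta^{d-\al})\to0$, so that $(d-\al)|x|^{-\al}\mathbf 1_{A}\,dx\rightharpoonup\meas\,\delta_0$. Setting $g_\al(x):=\int_{A^c}\frac{|f(y)|^p}{|x-y|^d|y|^\al}\,dy$, so that $\mathrm{I}(\al)=\int_A |x|^{-\al}g_\al(x)\,dx$, I would test this weak convergence against the continuous function $g_d$ and control the error $\sup_{x\in A}|g_\al(x)-g_d(x)|$ by dominated convergence, the uniform domination for $\al\in(d-\epsilon_0,d)$ coming from $|y|^{-\al}\le|y|^{-(d-\epsilon_0)}$ for $|y|\ge1$. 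Since $g_d(0)=\int_{\R^d}\frac{|f(y)|^p}{|y|^{2d}}\,dy$, this gives $(d-\al)\mathrm{I}(\al)\to\meas\int_{\R^d}\frac{|f(y)|^p}{|y|^{2d}}\,dy$, and the factor $2$ produces exactly the claimed right-hand side.

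It then remains to show $(d-\al)\mathrm{III}(\al)\to0$, for which it suffices to bound $\mathrm{III}(\al)$ uniformly for $\al$ in a left neighbourhood of $d$. I would split $\mathrm{III}(\al)$ into the far part $\{|x-y|\ge1\}$ and the near-diagonal part $\{|x-y|<1\}$. On the far part I bound $|f(x)-f(y)|$ by $2\|f\|_\infty$ (finite, since $f$ is continuous with a finite limit at infinity) and estimate $\int\int_{A^c\times A^c,\,|x-y|\ge1}|x-y|^{-d}|x|^{-\al}|y|^{-\al}$; carrying out the inner integration yields a bound of order $|x|^{-\al}\log|x|$, and the outer integral converges precisely because $\al>d/2$, which holds as $\al\to d^-$. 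On the near-diagonal part I use the $C^1$ regularity, $|f(x)-f(y)|\le\|\nabla f\|_\infty|x-y|$ (the gradient being bounded, as for $f\in C_c^1(\R^d\setminus\{0\})$), so that the integrand is controlled by $|x-y|^{p-d}|x|^{-\al}|y|^{-\al}$, whose diagonal integration is finite since $p-d>-d$ and leaves an outer integral convergent again thanks to $\al>d/2$. The main obstacle is the cross term: one must pass to the limit simultaneously in the concentrating weight $|x|^{-\al}$ and in the kernel $|x-y|^{-d}\to|y|^{-d}$ as $x\to0$, while the inner integral $g_\al$ itself varies with $\al$; the delicate point is the boundedness and uniform convergence of $g_\al$ up to $\partial A$, where $|x-y|$ may be small and one must again invoke $f(x)=0$ together with the $C^1$ bound $|f(y)|\lesssim\dist(y,A)\le|x-y|$ to secure local integrability.
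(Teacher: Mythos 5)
Your proof is correct in substance, but it takes a genuinely different route from the paper's. The paper dispatches the theorem in a few lines via the inversion $T(x)=x/|x|^2$ set up in Subsection \ref{Section 3.1}: since $|T(x)-T(y)|=|x-y|/(|x||y|)$ and the Jacobian is $|x|^{-2d}$, the substitution $x\mapsto T(x)$, $y\mapsto T(y)$ converts the seminorm with weight exponent $\al$ into the same seminorm of $Tf=f\circ T$ with exponent $d-\al$; because $\supp f$ avoids the origin, $Tf$ is compactly supported, so the limit $\al\rightarrow d^-$ is exactly the $\al\rightarrow 0^+$ limit of Theorem \ref{thm2} applied to $Tf$, and $\int_{\R^d}|Tf|^p\,dx=\int_{\R^d}|f(x)|^p|x|^{-2d}\,dx$ after substituting back. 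Your direct argument --- splitting at a ball $A$ on which $f$ vanishes, observing that $(d-\al)|x|^{-\al}\mathbf{1}_A\,dx$ converges weakly-$*$ to $\meas\,\delta_0$ (total mass $\meas\rho^{d-\al}\rightarrow\meas$, vanishing mass on annuli), testing against $g_d$ after establishing $\sup_{x\in A}|g_\al-g_d|\rightarrow 0$, and bounding the $A^c\times A^c$ remainder uniformly via the far/near split with $\al>d/2$ --- checks out, including your correct identification of the delicate point at $\partial A$: since $f\equiv 0$ on $A$ and $f\in C^1$, indeed $\nabla f$ vanishes on $\overline{A}$ and $|f(y)|\lesssim\dist(y,A)\le|x-y|$, which yields the locally integrable bound $|x-y|^{p-d}$ for the inner integrand. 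What the paper's approach buys is brevity and the fact that all the analysis is inherited from Theorem \ref{thm2} (itself proved via the sharp weighted Hardy inequality of \cite{sharpweighted}); what yours buys is independence from both the inversion trick and the Hardy inequality, and it makes transparent that the $\al\rightarrow d^-$ limit is a concentration phenomenon at the origin. One caveat, shared with the paper: your near-diagonal estimate invokes $\|\nabla f\|_\infty<\infty$, which the literal hypotheses ($C^1$ with a finite limit at infinity) do not guarantee; but the paper's proof is in the same position, since applying Theorem \ref{thm2} requires $Tf\in C_c^1(\R^d)$, i.e.\ $|\nabla f(z)|=o(|z|^{-2})$ at infinity, which likewise does not follow from the stated assumptions --- in the representative case $f\in C_c^1(\R^d\setminus\{0\})$ singled out in the statement, both arguments go through completely.
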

\section{Proofs}\label{Section3}
\subsection{Weighted fractional seminorms for the whole space}\label{Section 3.1}
Let us start with considerations on the finiteness of the weighted Gagliardo seminorms corresponding to the whole space $\R^d$. 

For $\Omega=\R^d\setminus\{0\}$ the seminorm \eqref{weightedgagliardoalfabeta} takes the form
\begin{equation}\label{seminormRd}
 [f]^p_{W^{s,p}_{\al,\be}(\R^d)}=\int_{\R^d}\int_{\R^d}\frac{|f(x)-f(y)|^p}{|x-y|^{d+sp}|x|^{\al}|y|^{\be}}\,dy\,dx.
\end{equation}
Dipierro and Valdinoci \cite[Lemma 2.1]{MR3420496} proved, that for $f\in C_c^1(\R^d)$, the expression \eqref{seminormRd} is finite, when $d,p\geq 1$, $0<s<1$, $-sp<\al,\be<d$ and $\al+\be<d$. Their proof works also in the case $s=0$. However, when the support of $f$ does not contain the origin, the range of parameters may be extended. To see this, let us first recall that the inversion $T\colon\R^d\setminus\{0\}\to\R^d\setminus\{0\}$ is defined by
$$
T(x)=\frac{x}{|x|^2}.
$$
It is well known that $T$ is an involution, $|T(x)|=\frac{1}{|x|}$, $|T(x)-T(y)|=\frac{|x-y|}{|x| |y|}$ and the Jacobian of $T$ is equal to $(-1)^d|x|^{-2d}$. Moreover, slightly abusing the notation, we can associate with the mapping $T$ a linear operator $T\colon C_c\left(\R^d\setminus\{0\}\right)\to C_c\left(\R^d\setminus\{0\}\right)$ by the formula
$$
Tf:=f\circ T.
$$
Indeed, denoting $B_\rho=\{x\in\R^d:|x|<\rho\}$, if $\supp f\subset B_R\setminus B_r$, then $\supp Tf\subset B_{\frac{1}{r}}\setminus B_{\frac{1}{R}}$. Of course, $Tf$ makes sense for all functions defined on $\R^d\setminus\{0\}$ or even $\R^d$, provided that $f$ has a limit in infinity. Hence, when $Tf\in C_c^1(\R^d)$, by a substitution $x\mapsto T(x),\,y\mapsto T(y)$ we get that
$$
[f]_{W^{s,p}_{\al,\be}(\R^d)}=[Tf]_{W^{s,p}_{\al',\be'}(\R^d)},
$$
where $\al'=d-\al-sp$, $\be'=d-\be-sp$, and the latter seminorm is finite when $-sp<\al,\be<d$, but $d-2sp<\al+\be<2d-sp$. In particular, for $s=0$, we obtain that \eqref{seminormRd} is finite for all $0<\al,\be<d$ such that $\al+\be\neq d$. The finiteness in the case when $\al+\be=d$ may be then showed by a simple limit argument.  The above explains why we should seek the asymptotics of $W^{0,p}_{\al,\al}(\R^d)$ - seminorms, when $\al\rightarrow 0^+$ and $\al\rightarrow d^-$.
\subsection{The lower Assouad codimension}\label{Assouad}
For given nonempty $E\subset\R^d$ the definition of the lower Assouad codimension may be found for example in \cite[Section 3]{MR3205534}. Below we provide an equivalent definition, namely the Aikawa condition, see \cite[Theorem 5.1]{MR3055588} and \cite[Remark 3.2]{MR3900847}. According to this, the lower Assouad codimension of $E$ is defined as the supremum of all exponents $\rho\geq 0$, for which there exists a constant $C\geq 1$ such that
\begin{equation}\label{Aikawa}
\int_{B(x,r)}\dist(y,E)^{-\rho}\,dy\leq Cr^{-\rho}\left|B(x,r)\right|,
\end{equation}
for all $x\in E$ and $r>0$. 

We also have $\ucodim_A(E)=d-\overline{\text{dim}}_A(E)$, where the latter is the more recognizable upper Assouad dimension. To give some concrete examples, $\ucodim_A(\{0\})=d$ and for a Lipschitz domain, the lower Assouad codimension of its boundary is equal to one. Moreover, from the Aikawa condition \eqref{Aikawa} it follows that the function $\dist(x,E)^{-\rho}$ is locally integrable near $E$, if $\rho<\ucodim_A(E)$. In particular, $\dd^{-\rho}\in L^1_{loc}(\overline{\Omega})$ if $\rho<\ucodim_A(\partial\Omega)$.

\subsection{Extensions and embeddings} In order to prove the second part of the Theorem \ref{bbmweighted}, we need to ensure that we are in the case, when $W^{1,p}_{\al+\be}(\Omega)\subset W^{s,p}_{\al,\be}(\Omega)$. This is of course not true in general; for example, we always have $C_c^1(\Omega)\subset W^{1,p}_{\al+\be}(\Omega)$, but not necessarily $C_c^1(\Omega)\subset W^{s,p}_{\al,\be}(\Omega)$, see \cite[Remark 17]{MK}. However, the desired inclusion will hold, provided, among others, that $\Omega$ is a $W^{1,p}_{\al+\be}$ - extension domain. For the general nonnegative and locally integrable weight $w$ on $\R^d$, the problem of extending the function from the weighted Sobolev space $W^{1,p}(\Omega,w)$ to $W^{1,p}(\R^d,w)$ was investigated by Chua \cite{MR1206339,MR1245837,MR2223454}. In virtue of \cite[Theorem 1.1]{MR1206339}, for $p\geq 1$ an extension operator for $W^{1,p}_{\gamma}(\Omega)$ exists, if the domain is sufficiently regular (for example Lipschitz) and the corresponding weight $\dd^{-\gamma}$ belongs to the Muckenhoupt class $A_p$. Recall that $w\in A_p$ for $p>1$, if
$$
\sup\frac{1}{|Q|}\left(\int_{Q}w(x)\,dx\right)^\frac{1}{p}\left(\int_{Q}w(x)^{-\frac{1}{p-1}}\,dx\right)^{\frac{p-1}{p}}<\infty,
$$
where the supremum is taken over all cubes $Q\subset\R^d$. Moreover, $w\in A_1$ if there exists a universal constant $C$ such that
$$
\frac{1}{|Q|}\int_{Q}w(x)\,dx\leq C w(y),\,\text{for almost all }y\in Q,
$$
for all cubes. It is straightforward to check that one always has the inclusion $A_1\subset A_p$.

The work \cite{MR3900847} provides a complete characterization of the condition $\dd^{-\gamma}\in A_p$ to be fulfilled. More precisely, by the result of\cite[Theorem 1.1]{MR3900847}, $\dd^{-\gamma}\in A_p$ for $p>1$ if and only if $-(p-1)\ucodim_A(\partial\Omega)<\gamma<\ucodim_A(\partial\Omega)$ and $\dd^{-\gamma}\in A_1$ if and only if $0\leq\gamma<\ucodim_A(\partial\Omega)$. In particular, since $\ucodim_A(\{0\})=d$, one recovers the classical fact that $|x|^{-\gamma}\in A_p$ if and only if $-(p-1)d<\gamma<d$ for $p>1$ and $|x|^{-\gamma}\in A_1$ if and only if $0\leq\gamma<d$.
\begin{prop} Let $\Omega$ be a $W^{1,p}_{\al+\be}$ - extension domain, $\dd^{-\al-\be}\in A_p$ and $p>1$. If in addition either
\begin{equation}\label{condition1}
\dd^{-\al}\in A_1,\,\dd^{-\be}\in A_1
\end{equation}
or
\begin{equation}\label{condition2}
\al\be\geq 0,\,\dd^{-\al-\be}\in A_1,
\end{equation}
 then there exists a constant $C=C(d,s,p,\al,\be,\Omega)$ such that
 $$
 \|f\|_{W^{s,p}_{\al,\be}(\Omega)}\leq C\|f\|_{W^{1,p}_{\al+\be}(\Omega)}
 $$
 for $f\in W^{1,p}_{\al+\be}(\Omega)$. In consequence, we have the embedding $W^{1,p}_{\al+\be}(\Omega)\subset W^{s,p}_{\al,\be}(\Omega)$ for all $0<s<1$. Moreover, one has 
 $$\limsup_{s\rightarrow 1^-}(1-s)[f]^p_{W^{s,p}_{\al,\be}(\Omega)}<\infty.$$
\end{prop}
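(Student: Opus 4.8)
The plan is to reduce the whole estimate to a single comparison on $\R^d$ and then to split the defining double integral at a fixed scale. Since $\Omega$ is a $W^{1,p}_{\al+\be}$ - extension domain, I first set $g=\Lambda f\in W^{1,p}(\R^d,\dd^{-\al-\be})$, so that $\|g\|_{W^{1,p}(\R^d,\dd^{-\al-\be})}\le C\|f\|_{W^{1,p}_{\al+\be}(\Omega)}$. Because the integrand in \eqref{weightedgagliardoalfabeta} is nonnegative, $g=f$ on $\Omega$, and $\dd=\dist(\cdot,\partial\Omega)$ is defined on all of $\R^d$, restricting the integration from $\R^d\times\R^d$ to $\Omega\times\Omega$ only decreases the value; hence $[f]_{W^{s,p}_{\al,\be}(\Omega)}\le[g]_{W^{s,p}_{\al,\be}(\R^d)}$ and $\|f\|_{L^p(\Omega,\dd^{-\al-\be})}\le\|g\|_{L^p(\R^d,\dd^{-\al-\be})}$. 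Thus it suffices to prove a bound of the form $[g]^p_{W^{s,p}_{\al,\be}(\R^d)}\le C(d,s,p,\al,\be)\,\|g\|^p_{W^{1,p}(\R^d,\dd^{-\al-\be})}$, and I will track the $s$-dependence of the constant to be of order $\tfrac1{1-s}+\tfrac1{sp}$, which is precisely what yields the final $\limsup$ statement.

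The two tools I would use are: (i) the classical Sobolev pointwise estimate $|g(x)-g(y)|\le C\,|x-y|\bigl(M|\nabla g|(x)+M|\nabla g|(y)\bigr)$, where $M$ is the Hardy--Littlewood maximal operator; this is legitimate a.e. since $\dd^{-\al-\be}\in A_p$ forces the dual weight to be locally integrable and hence $\nabla g\in L^1_{loc}$; and (ii) the Aikawa form of the $A_1$ condition, which gives $\int_{B(x,R)}\dd(y)^{-\gamma}\,dy\le C\,R^d\,\dd(x)^{-\gamma}$ for all $x$ and $R>0$, valid for the relevant nonnegative exponents $\gamma$. Here conditions \eqref{condition1} and \eqref{condition2} enter: under \eqref{condition1} this estimate holds for each factor $\dd^{-\al}$ and $\dd^{-\be}$ separately, while under \eqref{condition2} the sign constraint $\al\be\ge0$ lets me first decouple the mixed weight through Young's inequality, $\dd(x)^{-\al}\dd(y)^{-\be}\le c_1\dd(x)^{-\al-\be}+c_2\dd(y)^{-\al-\be}$, and then apply the Aikawa bound to $\dd^{-\al-\be}$; in both cases the characterization $\dd^{-\gamma}\in A_1\iff 0\le\gamma<\ucodim_A(\partial\Omega)$ guarantees the exponents are nonnegative, so the estimate applies.

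I then split $\{|x-y|<1\}$ (near) and $\{|x-y|\ge1\}$ (far). On the far part I use $|g(x)-g(y)|^p\le 2^{p-1}(|g(x)|^p+|g(y)|^p)$; for the $|g(x)|^p$ piece I decompose $\{|x-y|\ge1\}$ into dyadic annuli, apply the Aikawa bound on each ball $B(x,2^{k+1})$ to integrate $\dd(y)^{-\be}$, and sum the geometric series $\sum_{k\ge0}2^{-ksp}$, obtaining an inner integral controlled by $\tfrac{C}{sp}\dd(x)^{-\be}$ and hence a total of $\tfrac{C}{sp}\|g\|^p_{L^p(\dd^{-\al-\be})}$; the $|g(y)|^p$ piece is symmetric. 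On the near part I insert the maximal pointwise estimate; for the $M|\nabla g|(x)$ piece I integrate $\dd(y)^{-\be}$ against $|x-y|^{p-d-sp}$ over small dyadic annuli and sum $\sum_{j\ge0}2^{-jp(1-s)}$, which gives an inner integral controlled by $\tfrac{C}{1-s}\dd(x)^{-\be}$; then $\int(M|\nabla g|)^p\,\dd^{-\al-\be}\le C\int|\nabla g|^p\,\dd^{-\al-\be}$ by the Muckenhoupt boundedness of $M$ on $L^p(w)$ for $w=\dd^{-\al-\be}\in A_p$ (this is exactly where the $A_p$ hypothesis is used), producing $\tfrac{C}{1-s}\|\nabla g\|^p_{L^p(\dd^{-\al-\be})}$; again the companion piece is symmetric. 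Summing the near and far contributions gives the desired seminorm bound, hence the embedding $W^{1,p}_{\al+\be}(\Omega)\subset W^{s,p}_{\al,\be}(\Omega)$; and since all implied constants stay bounded as $s\to1^-$, multiplying by $(1-s)$ yields $\limsup_{s\to1^-}(1-s)[f]^p_{W^{s,p}_{\al,\be}(\Omega)}<\infty$.

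The main obstacle is the mismatch between the product weight $\dd(x)^{-\al}\dd(y)^{-\be}$ in \eqref{weightedgagliardoalfabeta} and the single weight $\dd^{-\al-\be}$ in the target norm: controlling the cross pieces uniformly in $s$ is exactly what forces the dichotomy \eqref{condition1}/\eqref{condition2} and uses the Aikawa estimates together with the $A_p$ maximal bound. The delicate point is to keep the lower cutoffs of the far integrals at the fixed scale $1$ (rather than at $\dd(x)$), so that no spurious negative power of $\dd$ survives near $\partial\Omega$, while still arranging the dyadic geometric sums so that their constants remain bounded as $s\to1^-$, which is indispensable for the $\limsup$ conclusion.
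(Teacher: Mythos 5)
Your proposal is correct and takes essentially the same route as the paper's proof: split the double integral at the fixed scale $|x-y|=1$, bound the far part via $|g(x)-g(y)|^p\lesssim |g(x)|^p+|g(y)|^p$ and the $A_1$ property of the weights, bound the near part via the pointwise estimate $|g(x)-g(y)|\le C|x-y|\left(M|\nabla g|(x)+M|\nabla g|(y)\right)$ together with the $A_p$-boundedness of the maximal operator, and reduce \eqref{condition2} to the single weight $\dd^{-\al-\be}$ (where the paper applies H\"older's inequality you use pointwise Young, an equivalent step). Your only other deviations are cosmetic: you carry out the dyadic-annuli computation by hand where the paper cites Hedberg's convolution lemma (both yield the crucial $O(1/(1-s))$ constant behind the $\limsup$ claim), and you justify the a.e.\ pointwise maximal inequality via $\dd^{-\al-\be}\in A_p\Rightarrow\nabla g\in L^1_{loc}$ where the paper instead invokes the weighted Meyers--Serrin theorem and Chua's smooth extension.
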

\begin{proof}
 Let $f\in W^{1,p}_{\al+\be}(\Omega)$ and let $\Lambda f$ be its extension. By the version of Meyers--Serrin theorem for weighted Sobolev spaces with Muckenhoupt weights \cite[Theorem 2.5]{MR1246890}, we may assume that $f\in C^{\infty}(\Omega)$. Moreover, by \cite[Theorem 1.1]{MR1245837}, $\Lambda f\in C^{\infty}(\R^d)$. We first assume \eqref{condition1} to be satisfied. Then, we can estimate the weighted Gagliardo seminorm as follows, 
\begin{align*}
 [f]^p_{W^{s,p}_{\al,\be}(\Omega)}&=\left(\int_{\Omega}\int_{|x-y|>1}+\int_{\Omega}\int_{|x-y|<1}\right)\frac{|f(x)-f(y)|^p}{|x-y|^{d+sp}}\dd(x)^{-\al}\dd(y)^{-\be}\,dy\,dx\\
 &=:I_1+I_2.
\end{align*}
For  $u\in L^1_{loc}(\R^d)$, let 
$$Mu(x)=\sup_{r>0}r^{-d}\int_{B(x,r)}u(y)\,dy$$ 
be the Hardy--Littlewood maximal operator. For $I_1$ we have the following bound,
\begin{align*}
I_1\leq & 2^{p-1}\int_{\Omega}\int_{|x-y|>1}\frac{|f(x)|^p}{|x-y|^{d+sp}}\dd(x)^{-\al}\dd(y)^{-\beta}\,dx\,dy\\
&+2^{p-1}\int_{\Omega}\int_{|x-y|>1}\frac{|f(y)|^p}{|x-y|^{d+sp}}\dd(y)^{-\be}\dd(x)^{-\al}\,dx\,dy\\
&\leq C\int_{\Omega}\frac{|f(x)|^p}{\dd(x)^{\al+\be}}\,dx,
\end{align*}
since, by the first lemma from \cite{MR312232} and the Muckenhoupt $A_1$ property we have
$$
\int_{|x-y|>1}\frac{\dd(x)^{-\al}}{|x-y|^{d+sp}}\,dx\leq C_1(d,s,p)M\dd^{-\al}(x)\leq C_2(d,\al,s,p)\dd(x)^{-\al}
$$
and the symmetric relation is valid for the second weight. Furthermore, analyzing the proof of the aforementioned inequality from \cite{MR312232} reveals that the constants are of order $O(1/(1-s))$, as $s\rightarrow 1^-$, which will contribute to the last statement of the Theorem.

Therefore, it suffices to focus on the integral $I_2$. Recall  the well-known pointwise estimate for Sobolev functions (see \cite{MR1226425}, \cite{MR312232}, \cite{MR1470421}), that is
$$
|u(x)-u(y)|\leq C|x-y|\left(M|\nabla u|(x)+M|\nabla u|(y)\right),\,\text{ for almost all }x,y\in\R^d,
$$
where $C=C(d)$.  This holds for $u\in W^{1,p}_{loc}(\R^d)$. Therefore, using the above, we can estimate $I_2$ as follows,
\begin{align*}
 I_2&\leq\int_{\R^d}\int_{|x-y|<1}\frac{|\Lambda f(x)-\Lambda f(y)|^p}{|x-y|^{d+sp}}\dd(x)^{-\al}\dd(y)^{-\be}\,dy\,dx\\ 
 &\leq C^p 2^{p-1}\int_{\R^d}\frac{\left(M|\nabla\Lambda f|(x)\right)^p}{\dd(x)^{\al}}\,dx\int_{|x-y|<1}\frac{\dd(y)^{-\be}\,dy}{|x-y|^{d-(1-s)p}}\\
 &+C^p 2^{p-1}\int_{\R^d}\frac{\left(M|\nabla\Lambda f|(y)\right)^p}{\dd(y)^{\be}}\,dy\int_{|x-y|<1}\frac{\dd(x)^{-\al}\,dx}{|x-y|^{d-(1-s)p}}.
\end{align*}
Again by \cite{MR312232} and the Muckenhoupt $A_1$ property, we have
\begin{align*}
 \int_{|x-y|<1}\frac{\dd(y)^{-\be}\,dy}{|x-y|^{d-(1-s)p}}&\leq C_3(d,s,p)M\dd^{-\be}(x)\leq C_4(d,s,p,\be)\dd^{-\be}(x)   
\end{align*}
and similarly $\int_{|x-y|<1}\frac{\dd(x)^{-\al}\,dx}{|x-y|^{d-(1-s)p}}\leq C_5(d,s,p,\al)\dd(y)^{-\al}$. In consequence, $I_2$ does not exceed a constant times
\begin{align*}
 \int_{\R^d}\frac{\left(M|\nabla\Lambda f|(x)\right)^p}{\dd(x)^{\al+\be}}\,dx\leq C_6\int_{\R^d}\frac{|\nabla\Lambda f(x)|^p}{\dd(x)^{\al+\be}}\,dx\leq C_7\|f\|^p_{W^{1,p}_{\al+\be}(\Omega)},   
\end{align*}
with $C_6$ and $C_7$ depending on $d,s,p,\al,\be$ and $\Omega$. In the first passage above, we used the well-known fact that the Hardy--Littlewood maximal operator is bounded on $L^p(\R^d,w)$, when $p>1$ and $w\in A_p$.

Now, assuming \eqref{condition2} instead of \eqref{condition1}, together with $|\al|+|\be|>0$, we first use a simple observation that, when $\al\be\geq 0$, one has $[f]_{W^{s,p}_{\al,\be}(\Omega)}\leq[f]_{W^{s,p}_{\al+\be,0}(\Omega)}$. Indeed, this follows from the application of Hölder's inequality with exponents $(\al+\be)/\al$ and $(\al+\be)/\be$. Then, we proceed as in the first part of the proof to bound $[f]_{W^{s,p}_{\al+\be,0}(\Omega)}$ by $[f]_{W^{1,p}_{\al+\be}(\Omega)}$. 

To end the proof, it suffices to notice that the property $$\limsup_{s\rightarrow 1^-}(1-s)[f]^p_{W^{s,p}_{\al,\be}(\Omega)}<\infty$$ clearly follows from obtained estimates.
\end{proof}

Before proving our first main result, we need a simple technical lemma concerning the continuity of weighted Gagliardo seminorms. For  reader's  convenience, we provide an easy proof.
\begin{lem}\label{lem1}
Let $f\colon\Omega\to\R^d$ be measurable and suppose that the weighted Gagliardo seminorms \eqref{weightedgagliardoalfabeta} are finite for $-\infty\leq\al_1<\al<\al_2\leq\infty$, $-\infty\leq\be_1<\be<\be_2\leq\infty$ and $0\leq s_1<s<s_2\leq 1$. Then, the function
$$
(s,\al,\be)\mapsto [f]^p_{W^{s,p}_{\al,\be}(\Omega)}
$$
is continuous on $(s_1,s_2)\times (\al_1,\al_2)\times(\be_1,\be_2)$. 
\end{lem}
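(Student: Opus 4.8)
The plan is to prove continuity at an arbitrary point of the open box by the dominated convergence theorem, after localizing to a small closed sub-box on which a single integrable majorant controls the integrand uniformly in the parameters.

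First I would fix a point $(s_0,\al_0,\be_0)\in(s_1,s_2)\times(\al_1,\al_2)\times(\be_1,\be_2)$ and choose finite numbers $\sigma_1<s_0<\sigma_2$, $a_1<\al_0<a_2$, $b_1<\be_0<b_2$ so that the closed box $[\sigma_1,\sigma_2]\times[a_1,a_2]\times[b_1,b_2]$ is contained in the open box. (The eight corners $(\sigma_i,a_j,b_k)$ then lie in the open box, which is the only feature I will use; this also disposes of the possibility of infinite endpoints, since the chosen sub-box is compact.) For each fixed pair $x\neq y$ in $\Omega$ we have $\dd(x),\dd(y)>0$, so the integrand of \eqref{weightedgagliardoalfabeta}, namely $\frac{|f(x)-f(y)|^p}{|x-y|^{d+sp}}\dd(x)^{-\al}\dd(y)^{-\be}$, is a continuous function of $(s,\al,\be)$; joint measurability in $(x,y)$ is routine.

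The key step is domination. I would use the elementary inequality $t^{-c}\leq t^{-c_1}+t^{-c_2}$, valid for every $t>0$ whenever $c\in[c_1,c_2]$, which holds because $c\mapsto t^{-c}=e^{-c\ln t}$ is monotone and hence attains its maximum over $[c_1,c_2]$ at an endpoint. Applying this to each parameter-dependent factor over the closed box gives, for $s\in[\sigma_1,\sigma_2]$, the bound $|x-y|^{-(d+sp)}\leq|x-y|^{-(d+\sigma_1p)}+|x-y|^{-(d+\sigma_2p)}$; for $\al\in[a_1,a_2]$, the bound $\dd(x)^{-\al}\leq\dd(x)^{-a_1}+\dd(x)^{-a_2}$; and likewise for $\be\in[b_1,b_2]$. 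Multiplying these three bounds together and expanding produces a majorant that is a sum of eight terms, each equal to the integrand of the seminorm $[f]^p_{W^{\sigma_i,p}_{a_j,b_k}(\Omega)}$ at a corner of the closed box. Since every such corner lies in the open box, each of these eight integrals is finite by hypothesis, so the majorant is integrable over $\Omega\times\Omega$, uniformly for $(s,\al,\be)$ in the closed box.

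With pointwise continuity of the integrand and this fixed integrable majorant in hand, the dominated convergence theorem yields continuity of $(s,\al,\be)\mapsto[f]^p_{W^{s,p}_{\al,\be}(\Omega)}$ at $(s_0,\al_0,\be_0)$, and since the point was arbitrary, continuity holds throughout the open box. There is no genuine obstacle here; the only point requiring care is to split each factor at \emph{both} endpoints rather than selecting a single dominating exponent, because the base $|x-y|$ ranges over both $(0,1)$ and $(1,\infty)$ and $\dd$ may likewise take both small and large values on $\Omega$, so that no one-sided bound works globally, whereas the two-term estimate handles both regimes at once.
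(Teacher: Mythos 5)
Your proof is correct and complete. The paper itself offers no argument for Lemma \ref{lem1} (it is stated with the remark ``We omit the easy proof''), and yours is precisely the standard argument evidently intended: dominate the integrand uniformly over a compact sub-box by the sum of the eight corner integrands, using $t^{-c}\leq t^{-c_1}+t^{-c_2}$ for $t>0$ and $c\in[c_1,c_2]$, note that each corner seminorm $[f]^p_{W^{\sigma_i,p}_{a_j,b_k}(\Omega)}$ is finite by hypothesis since the corners lie in the open box, and conclude by dominated convergence; you also correctly flagged the only delicate point, namely that one must split at both endpoints because $|x-y|$ and $\dd$ take values on both sides of $1$, so no single corner dominates globally.
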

\begin{proof}
  Let $\varepsilon>0$ be sufficiently small. Assume that $\al,\be\geq 0$, if one of the exponents is negative, the proof is similar. We have
  \begin{align*}
   [f]^p_{W^{s,p}_{\al,\be}(\Omega)}&=\left(\int_{\{d_{\Omega}(x)<1\}}+\int_{\{d_{\Omega}(x)>1\}}\right)\left(\int_{\{d_{\Omega}(y)<1\}}+\int_{\{d_{\Omega}(y)>1\}}\right)\frac{|f(x)-f(y)|^p}{|x-y|^{d+sp}}\\
   &\times d_{\Omega}(x)^{-\al}d_{\Omega}(y)^{-\be}\,dy\,dx\\
   &\leq [f]^p_{W^{s,p}_{\al+\varepsilon,\be+\varepsilon}(\Omega)}+[f]^p_{W^{s,p}_{\al-\varepsilon,\be-\varepsilon}(\Omega)} + [f]^p_{W^{s,p}_{\al+\varepsilon,\be-\varepsilon}(\Omega)}+[f]^p_{W^{s,p}_{\al-\varepsilon,\be+\varepsilon}(\Omega)}.
  \end{align*}
  Dividing the range of integration into cases $|x-y|>1$ and $|x-y|<1$, we obtain that the above does not exceed 
  \begin{align*}
  &[f]^p_{W^{s+\varepsilon,p}_{\al+\varepsilon,\be+\varepsilon}(\Omega)}+[f]^p_{W^{s+\varepsilon,p}_{\al-\varepsilon,\be-\varepsilon}(\Omega)}+ [f]^p_{W^{s-\varepsilon,p}_{\al+\varepsilon,\be+\varepsilon}(\Omega)}+[f]^p_{W^{s-\varepsilon,p}_{\al-\varepsilon,\be-\varepsilon}(\Omega)}\\
  &+[f]^p_{W^{s+\varepsilon,p}_{\al+\varepsilon,\be-\varepsilon}(\Omega)}+[f]^p_{W^{s-\varepsilon,p}_{\al+\varepsilon,\be-\varepsilon}(\Omega)}+ [f]^p_{W^{s+\varepsilon,p}_{\al-\varepsilon,\be+\varepsilon}(\Omega)}+[f]^p_{W^{s-\varepsilon,p}_{\al-\varepsilon,\be+\varepsilon}(\Omega)}<\infty.
  \end{align*}
  Thus, for $(s',\al',\be')\rightarrow (s,\al,\be)$, we may  use the above estimation to justify the Lebesgue Dominated Convergence theorem and obtain that $[f]^p_{W^{s',p}_{\al',\be'}(\Omega)}\longrightarrow [f]^p_{W^{s,p}_{\al,\be}(\Omega)}$. 
\end{proof}
\begin{proof}[Proof of Theorem \ref{bbmweighted}]
 Let $f\in C_c^2(\overline{\Omega})$ and let $0<\theta<1$. We have
$$I_1\leq [f]^p_{W^{s,p}_{\al,\be}(\Omega)}=I_1+I_2,$$
where
$$
I_1=\int_{\Omega}\int_{B(x,\theta \dd(x))}\frac{|f(x)-f(y)|^p}{|x-y|^{d+sp}}d_{\Omega}(x)^{-\al}d_{\Omega}(y)^{-\beta}\,dy\,dx
$$
and
$$
I_2=\int_{\Omega}\int_{\Omega\setminus B(x,\theta d(x))}\frac{|f(x)-f(y)|^p}{|x-y|^{d+sp}}d_{\Omega}(x)^{-\al}d_{\Omega}(y)^{-\beta}\,dy\,dx.
$$
If $|x-y|<\theta \dd(x)$, then $(1-\theta)\dd(x)<\dd(y)<(1+\theta)\dd(x)$, therefore
\begin{align*}
 I_1\geq (1+\theta\,\sgn\be)^{-\be}\int_{\Omega}\frac{\,dx}{\dd(x)^{\al+\be}}\int_{B(x,\theta \dd(x))}\frac{|f(x)-f(y)|^p}{|x-y|^{d+sp}}\,dy.   
\end{align*}
It was proved in  \cite[(6)]{BBM01} (see also \cite[Proof of Theorem 1.1]{MR4356180})  that, for $f\in C^2(\Omega)$,
\begin{equation}\label{bbmlimit}
\lim_{s\rightarrow 1^-}(1-s)\int_{B(x,\theta \dd(x))}\frac{|f(x)-f(y)|^p}{|x-y|^{d+sp}}\,dy=K_{d,p}|\nabla f(x)|^p,
\end{equation}
with $K_{d,p}$ defined by \eqref{Kdp}. Moreover, the convergence is dominated. When $\Omega$ is bounded, we can simply use the Lipschitz property of $f$ to obtain that
\begin{align}\label{bound}
(1-s)&\int_{B(x,\theta \dd(x))}\frac{|f(x)-f(y)|^p}{|x-y|^{d+sp}}\,dy\leq C\dd(x)^{(1-s)p}\\
\nonumber&\leq C\left(\diam\Omega\right)^{(1-s)p}\leq C\max\{1,\diam\Omega\}^p. 
\end{align}
Since $\al+\be<\ucodim_A(\partial\Omega)$, the weight $\dd^{-\al-\be}$ is integrable over $\Omega$ and  from the Lebesgue Dominated Convergence Theorem it follows that
\begin{equation}\label{I}
\lim_{s\rightarrow 1^-}(1-s)I=K_{d,p}\int_{\Omega}\frac{|\nabla f(x)|^p}{\dd(x)^{\al+\be}}\,dx,
\end{equation}
where
$$
I:=\int_{\Omega}\frac{dx}{\dd(x)^{\al+\be}}\int_{B(x,\theta \dd(x))}\frac{|f(x)-f(y)|^p}{|x-y|^{d+sp}}\,dy.
$$

However, when $\Omega$ is unbounded, the argument is slightly more involved. Let $K=\supp f\subset\overline{\Omega}$. We have 
\begin{align*}
 I=\int_{K}\frac{dx}{\dd(x)^{\al+\be}}\int_{B(x,\theta \dd(x))}\frac{|f(x)-f(y)|^p}{|x-y|^{d+sp}}\,dy+\int_{\Omega\setminus K}\frac{dx}{\dd(x)^{\al+\be}}\int_{K\cap B(x,\theta \dd(x))}\frac{|f(y)|^p}{|x-y|^{d+sp}}\,dy.   
\end{align*} 
Since $d_{\Omega}^{-\al-\be}\in L^1_{loc}(\overline{\Omega}$), for the first integral above we use a bound similar to \eqref{bound}. To deal with the second, we define $\Omega_1=\{x\in\Omega\setminus K\colon \dist(x,K)<1\}$ and $\Omega_2=\Omega\setminus\Omega_1$. The integral over $\Omega_1$ can again be treated like in \eqref{bound}, and the integral over $\Omega_2$ tends to zero, when multiplied by $1-s$, because for $x\in\Omega_2$ and $y\in K$ we have $|x-y|\geq 1$, hence $|x-y|^{-d-sp}\leq |x-y|^{-d-s_0 p}$ for $s\in (s_0,1)$, which yields the bound independent on $s$.

Hence, summarizing the above and using \eqref{bbmlimit} we obtain that 
$$
\liminf_{s\rightarrow 1^-}(1-s)I_1\geq (1+\theta\,\sgn\be)^{-\be}K_{d,p}\int_{\Omega}\frac{|\nabla f(x)|^p}{d_{\Omega}(x)^{\al+\be}}\,dx.
$$
Conversely, we analogously have 
$$\limsup_{s\rightarrow 1^-}(1-s)I_1\leq (1-\theta\,\sgn\be)^{-\be}K_{d,p}\int_{\Omega}\frac{|\nabla f(x)|^p}{d_{\Omega}(x)^{\al+\be}}\,dx.$$
Next, we will show that $\displaystyle\lim_{s\rightarrow 1^-}(1-s)I_2=0$. To this end, we write $s=\lambda s+(1-\lambda)s$ for sufficiently small $\lambda\in(0,1)$ and observe that
\begin{align*}
 I_2&=\int_{\Omega}\int_{\Omega\setminus B(x,\theta d(x))}\frac{|f(x)-f(y)|^p}{|x-y|^{d+s(1-\lambda)p}|x-y|^{s\lambda p}}d_{\Omega}(x)^{-\al}d_{\Omega}(y)^{-\beta}\,dy\,dx\\
 &\leq\theta^{-s\lambda p}\int_{\Omega}\int_{\Omega}\frac{|f(x)-f(y)|^p}{|x-y|^{d+s(1-\lambda)p}}d_{\Omega}(x)^{-\al-s\lambda p}d_{\Omega}(y)^{-\beta}\,dy\,dx.
\end{align*}
If $\lambda$ is appropriately small, we have $s(1-\lambda)>s_0$ and, by openness of $U$, $\al+s\lambda p\in U$ and the above is bounded for $s\rightarrow 1^-$ by Lemma \ref{lem1}, as a $W^{1-\lambda,p}_{\al+\lambda p,\be}(\Omega)$ - seminorm of $f$. In consequence, $(1-s)I_2\rightarrow 0$, when $s\rightarrow 1^-$. 

Overall, we get for any $0<\theta<1$ that
\begin{align*}
(1+\theta\,\sgn\be)^{-\be}K_{d,p}\int_{\Omega}\frac{|\nabla f(x)|^p}{d_{\Omega}(x)^{\al+\be}}\,dx&\leq \liminf_{s\rightarrow 1^-}(1-s)[f]^p_{W^{s,p}_{\al,\be}}\\
&\leq\limsup_{s\rightarrow 1^-}(1-s)[f]^p_{W^{s,p}_{\al,\be}(\Omega)}\\
&\leq (1-\theta\,\sgn\be)^{-\be}K_{d,p}\int_{\Omega}\frac{|\nabla f(x)|^p}{d_{\Omega}(x)^{\al+\be}}\,dx
\end{align*}
and the statement of the theorem for $f\in C_c^2(\overline{\Omega})$ follows by letting $\theta\rightarrow 0^+$. For general $f\in W^{1,p}_{\al+\be}(\Omega)$ and $\Omega$ being an extension domain, \eqref{limits1} is a consequence of density of $C_c^2(\overline{\Omega})$ in $W^{1,p}_{\al+\be}(\Omega)$.
Indeed, by \cite[Theorem 1.1]{MR2072105}, $C_c^{2}(\R^d)$ is dense in $W^{1,p}(\R^d,\dd^{-\al-\be})$ and the existence of the extension operator guarantees the desired result, by taking the restriction of the approximating functions to $\overline{\Omega}$.  

In order to end the proof, it suffices to show that for $p>1$ and $f\in L^{p}(\Omega,d_{\Omega}^{-\al-\be})$, if
$$
\liminf_{s\rightarrow 1^{-}}(1-s)\int_{\Omega}\int_{\Omega}\frac{|f(x)-f(y)|^p}{|x-y|^{d+sp}}d_{\Omega}(x)^{-\al}d_{\Omega}(y)^{-\be}\,dy\,dx<\infty,
$$
then $f\in W^{1,p}_{\al+\be}(\Omega)$.

Let $\mathcal{W}=\{Q_j\}_{j\in\mathbb{N}}$ be a Whitney decomposition of $\Omega$ into open cubes. In particular, when $x\in Q_n$, then $l(Q_n)\approx\dd(x)$, where $l(Q_n)$ denotes the length of the side of the cube $Q_n$ (by the symbol $\approx$ we mean here and elsewhere the comparability with constants independent of functions and varying parameters). Thus, we have
\begin{align*}
 [f]^p_{W^{s,p}_{\al,\be}(\Omega)}&\geq\sum_j\int_{Q_j}\int_{Q_j}\frac{|f(x)-f(y)|^p}{|x-y|^{d+sp}}d_{\Omega}(x)^{-\al}d_{\Omega}(y)^{-\be}\,dy\,dx\\
 &\approx \sum_{j}l(Q_j)^{-\al-\be}\int_{Q_j}\int_{Q_j}\frac{|f(x)-f(y)|^p}{|x-y|^{d+sp}}\,dy\,dx.
\end{align*}
By Fatou's lemma, for some constant $C=C(\al,\be,d)>0$,
\begin{align*}
 \liminf_{s\rightarrow 1^{-}}(1-s)&[f]^p_{W^{s,p}_{\al,\be}(\Omega)}\geq C\sum_j l(Q_j)^{-\al-\be}\liminf_{s\rightarrow 1^-}(1-s)[f]^p_{W^{s,p}(Q_j)}.
\end{align*}
Therefore, $\liminf_{s\rightarrow 1^-}(1-s)[f]^p_{W^{s,p}(Q_j)}$ is finite for every $Q\in\mathcal{W}$, hence, since $f\in L^p(Q_j)$, by the Bourgain--Brezis--Mironescu characterization of Sobolev spaces from \cite{BBM01} we have $f\in W^{1,p}(Q_j)$ and
$$
\lim_{s\rightarrow 1^-}(1-s)[f]^p_{W^{s,p}(Q_j)}=K_{d,p}\int_{Q_j}|\nabla f(x)|^p\,dx.
$$
In consequence,
\begin{align*}
 \liminf_{s\rightarrow 1^{-}}(1-s)[f]^p_{W^{s,p}_{\al,\be}(\Omega)}&\geq C K_{d,p}\sum_{j}l(Q_j)^{-\al-\be}\int_{Q_j}|\nabla f(x)|^p\,dx\\
 &\approx \sum_{j}\int_{Q_j}\frac{|\nabla f(x)|^p}{d_{\Omega}(x)^{\al+\be}}\,dx=\int_{\Omega}\frac{|\nabla f(x)|^p}{d_{\Omega}(x)^{\al+\be}}\,dx
\end{align*}
and we obtain that the latter is finite and $f\in W^{1,p}_{\al+\be}(\Omega)$. The proof is complete.
\end{proof}

\begin{proof}[Proof of Theorem \ref{thm2}]
First, we will prove the lower bound for the limit defined in the statement of the Theorem. By the weighted fractional Hardy inequality from \cite[Corollary 7]{sharpweighted}, we have
$$
 \int_{\R^d}\int_{\R^d}\frac{|f(x)-f(y)|^p}{|x-y|^d|x|^{\al}|y|^{\al}}\,dy\,dx\geq\mathcal{C}(d,p,\al)\int_{\R^d}\frac{|f(x)|^p}{|x|^{2\al}}\,dx,   
$$
where the constant $\mathcal{C}(d,p,\al)$ is given by
$$
\mathcal{C}(d,p,\al)=2\meas\int_{0}^1\frac{r^{\al-1}\left(1-r^{\frac{d-2\al}{p}}\right)^p}{1-r^2}\,dr.
$$
Integrating by parts and doing basic calculations, it is easy to show that $$\displaystyle\lim_{\al\rightarrow 0^+}\al\,\mathcal{C}(d,p,\al)=2\meas,$$
hence, we automatically obtain that
$$
\liminf_{\alpha\rightarrow 0^+}\al\int_{\R^d}\int_{\R^d}\frac{|f(x)-f(y)|^p}{|x-y|^d|x|^{\al}|y|^{\al}}\,dy\,dx\geq 2\meas\int_{\R^d}|f(x)|^p\,dx.
$$
To prove the upper bound, we split the integration range as in \cite[Proof of Theorem 3]{MR1940355} as follows,
\begin{align*}
 \int_{\R^d}\int_{\R^d}\frac{|f(x)-f(y)|^p}{|x-y|^d|x|^{\al}|y|^{\al}}\,dy\,dx&\leq 2\Bigg[\left(\int_{\R^d}\int_{|y|\geq 2|x|}\frac{|f(x)|^p}{|x-y|^d|x|^{\al}|y|^{\al}}\,dy\,dx\right)^{\frac{1}{p}}\\
 &+\left(\int_{\R^d}\int_{|y|\geq 2|x|}\frac{|f(y)|^p}{|x-y|^d|x|^{\al}|y|^{\al}}\,dx\,dy\right)^{\frac{1}{p}}\Bigg]^p\\
 &+2\int_{\R^d}\int_{|x|<|y|<2|x|}\frac{|f(x)-f(y)|^p}{|x-y|^d|x|^{\al}|y|^{\al}}\,dy\,dx\\
 &=:2\left(I_1^{\frac{1}{p}}+I_2^{\frac{1}{p}}\right)^p+2I_3.
\end{align*}
First, we will deal with the integral $I_1$. We have
\begin{align*}
 I_1&\leq\int_{\R^d}\frac{|f(x)|^p}{|x|^{\al}}\,dx\int_{|y-x|\geq |x|}\frac{\,dy}{|x-y|^d|y|^{\al}}\\
 &=\int_{\R^d}\frac{|f(x)|^p}{|x|^{2\al}}\,dx\int_{|y-e_1|\geq 1}\frac{\,dy}{|y-e_1|^d|y|^{\al}},
\end{align*}
by a standard argument of rotation and translation in the inner integral (here $e_1=(1,0,\dots,0)$ is the unit vector). Let $R>1$. Then
\begin{align*}
 \int_{|y-e_1|\geq 1}\frac{\,dy}{|y-e_1|^d|y|^{\al}}&=\int_{|y-e_1|\geq 1,\,|y|\leq R}\frac{\,dy}{|y-e_1|^d|y|^{\al}}+\int_{|y-e_1|\geq 1,\,|y|>R}\frac{\,dy}{|y-e_1|^d|y|^{\al}}.   
\end{align*}
The first term above is clearly bounded for $\al\rightarrow 0^+$, so it suffices to estimate the second one. Notice that by triangle inequality, polar coordinates and integration by parts,
\begin{align*}
 \al\int_{|y-e_1|\geq 1,\,|y|>R}\frac{\,dy}{|y-e_1|^d|y|^{\al}}&\leq\al\int_{|y|>R}\frac{\,dy}{(|y|-1)^d|y|^{\al}}=\al\meas\int_{R}^{\infty}\frac{r^{d-\al-1}}{(r-1)^d}\,dr\\
 &=\meas\left[R^{-\al}\left(\frac{R}{R-1}\right)^d+\int_{R}^{\infty}r^{-\al}\frac{d}{dr}\left(\frac{r}{r-1}\right)^d\,dr\right]\\
 &\longrightarrow \meas,
\end{align*}
when $\al\rightarrow 0^+$.

Since $|y|\geq 2|x|$ implies $|y-x|\geq\frac{|y|}{2}$, for the integral $I_2$ we have
\begin{align*}
 I_2\leq2^d\int_{\R^d}\frac{|f(y)|^p}{|y|^{d+\al}}\,dy\int_{|x|\leq\frac{|y|}{2}}\frac{\,dx}{|x|^{\al}}=\frac{2^{2d-\al}\meas}{d-\al}\int_{\R^d}\frac{|f(y)|^p}{|y|^{2\al}}\,dy,   
\end{align*}
so $\displaystyle\lim_{\al\rightarrow 0^+}\al\,I_2=0$.
Finally, let us consider the integral $I_3$. Observe that when $|x|<|y|<2|x|$, by triangle inequality it holds 
$$|x-y|\leq |x|+|y|<\min\{3|x|,2|y|\}\leq\sqrt{6|x||y|},$$
therefore
$$
I_3\leq 6^{\al}\int_{\R^d}\int_{|x|<|y|<2|x|}\frac{|f(x)-f(y)|^p}{|x-y|^{d+2\al}}\,dy\,dx.
$$
The latter expression is a part of the $W^{2\al,p}(\R^d)$-seminorm and it was shown in \cite[Proof of Theorem 3]{MR1940355} that it is of order $o(1/\al)$. Notice that $C_c^1(\R^d)\subset W^{2\al,p}(\R^d)$. In consequence, we have $\displaystyle\lim_{\al\rightarrow 0^+}\al\,I_3=0$, hence,
$$
\limsup_{\al\rightarrow 0^+}\al\int_{\R^d}\int_{\R^d}\frac{|f(x)-f(y)|^p}{|x-y|^d|x|^{\al}|y|^{\al}}\,dy\,dx\leq 2\meas\int_{\R^d}|f(x)|^p\,dx.
$$
The above combined with the first part ends the proof.
\end{proof}
\begin{rem} In the above proof it suffices to assume that $f$ is bounded with compact support (to satisfy the Hardy inequality from \cite[Corollary 7]{sharpweighted}) and $f\in\bigcup_{0<s<1}W^{s,p}(\R^d)$. In particular, taking $f=\chi_E$, the indicator function of a bounded Borel set $E$, we obtain that
$$
\lim_{\al\rightarrow 0^+}\al\int_{E}\int_{E^c}\frac{\,dy\,dx}{|x-y|^{d}|x|^{\al}|y|^{\al}}=\meas|E|,
$$
provided that $\chi_E\in\bigcup_{0<s<1}W^{s,p}(\R^d)$. The latter condition is related to the dimension of the boundary of $E$, see \cite{MR3032092}.
\end{rem}

\begin{proof}[Proof of Theorem \ref{mazyashapolimitd}]
The substitution $x\mapsto T(x),\,y\mapsto T(y)$ (see Subsection \ref{Section 3.1}) and the result of Theorem \ref{thm2} yields
\begin{align*}
\lim_{\al\rightarrow d^-}\left(d-\al\right)&\int_{\R^d}\int_{\R^d}\frac{|f(x)-f(y)|^p}{|x-y|^d|x|^{\al}|y|^{\al}}\,dy\,dx\\
&=\lim_{\al\rightarrow d^-}\left(d-\al\right)\int_{\R^d}\int_{\R^d}\frac{|Tf(x)-Tf(y)|^p}{|x-y|^d|x|^{d-\al}|y|^{d-\al}}\,dy\,dx\\
&=2\meas\int_{\R^d}|Tf(x)|^p\,dx\\
&=2\meas\int_{\R^d}\frac{|f(x)|^p}{|x|^{2d}}\,dx.
\end{align*}
\end{proof}
\noindent

\textbf{Acknowledgement.} The authors would like to thank the anonymous referee for numerous comments, which led to an improvement of the manuscript.

\def\polhk#1{\setbox0=\hbox{#1}{\ooalign{\hidewidth
  \lower1.5ex\hbox{`}\hidewidth\crcr\unhbox0}}}
  \def\polhk#1{\setbox0=\hbox{#1}{\ooalign{\hidewidth
  \lower1.5ex\hbox{`}\hidewidth\crcr\unhbox0}}} \def\cprime{$'$}
  \def\cprime{$'$} \def\cprime{$'$} \def\cprime{$'$} \def\cprime{$'$}
  \def\cprime{$'$}

\end{document}